\newcolumntype{P}[1]{>{\centering\arraybackslash}p{#1}}
\newtheorem{defn}{Definition}[section]
\newtheorem{theorem}{Theorem}[section]
\newtheorem{thm}{Theorem}[section]
\newtheorem{prop}[defn]{Proposition}
\newtheorem{lem}[defn]{Lemma}
\theoremstyle{remark}
\newtheorem{remark}[defn]{Remark}
\numberwithin{equation}{section}
\newcommand{\bb}{\begin{equation}}
\newcommand{\ee}{\end{equation}}
\newcommand{\origin}{\mathbf{o}}
\newcommand{\F}{\mathcal{F}}
\newcommand{\Fd}{\mathcal{F}^{\bullet}}
\newcommand{\PP}{{\rm \bf P}}
\newcommand{\proba}[1]{\mathbf{P}\left(#1\right)}
 \newcommand{\Kn}{\mathbf{K}_{n}} 
\newcommand{\lsf}{{\rm \bf \lambda SF}}
\newcommand{\lsfkn}{\lsf(\Kn)}
\newcommand{\shape}{\text{Shape}}
\newcommand\sut{\,;\;}
\newcommand{\dfn}[1]{\textbf{\textit{#1}}}
\newcommand\ST{\,;\;}
\def\rlabel #1 #2{\begin{equation} \label{#1} #2 \end{equation}}
\def\rproof{\begin{proof}}
\def\Qed{\end{proof}}
\tikzset{ 
reuse path/.code={\pgfsyssoftpath@setcurrentpath{#1}} 
} 
\tikzset{even odd clip/.code={\pgfseteorule}, 
protect/.code={ 
\clip[overlay,even odd clip,reuse path=#1] 
(current bounding box.south west) rectangle (current bounding box.north east)
; 
}} 
\title{\textsc{Local limit of massive spanning forests \\ on the complete graph}}
\author[1]{Matteo \textsc{D'Achille}}
\author[1,2]{Nathana\"el \textsc{Enriquez}}
\author[1]{Paul \textsc{Melotti}}
\affil[1]{Laboratoire de Mathématiques d’Orsay, CNRS, Université Paris-Saclay, 91405, Orsay, France}
\affil[2]{DMA, Ecole Normale Supérieure – PSL, 45 rue d’Ulm, F-75230 Cedex 5 Paris, France}
{\affil[ ]{\texttt {\{matteo.dachille,nathanael.enriquez,paul.melotti\}@universite-paris-saclay.fr}}}
\date{\today}
\renewenvironment{abstract}
 {\small
  \begin{center}
  \bfseries \abstractname\vspace{-.5em}\vspace{0pt}
  \end{center}
  \list{}{%
    \setlength{\leftmargin}{7mm}
    \setlength{\rightmargin}{\leftmargin}%
  }%
  \item\relax}
 {\endlist}
\begin{document}
\maketitle

\begin{abstract}
\noindent
We identify the local limit of massive spanning forests on the complete graph. This generalizes a well-known theorem of Grimmett on the local limit of uniform spanning trees on the complete graph.
\end{abstract}

\bigskip

\section{Introduction and main result}

Let ${G}$ be a graph without self-loops. A \dfn{spanning tree} of $G$ is a connected subgraph of $G$ without cycles and containing all the vertices of $G$. A \dfn{spanning forest} of $G$ is a subgraph of $G$ without cycles and containing all the vertices of $G$.
Denote by $\F({G})$ the set of spanning forests of ${G}$ and for $k\geq 1$ the subset $\F_k({G})$ of spanning forests with $k$ connected components (so that $\F_1({G})$ is the set of spanning trees of $G$). A~\dfn{rooted spanning forest} is a spanning forest with one marked vertex per connected component, also called the~\dfn{root} of that tree. The set of rooted spanning forests is denoted by $\Fd(\mathcal{G})$, and the subset of those made of $k$ trees is denoted by $\Fd_k(\mathcal{G})$. Note that each forest $f \in \F_k(\mathcal{G})$ with connected components $t_1,\dots, t_k$ corresponds to $\prod_{i=1}^k |t_i|$ rooted forests, where $|t|$ stands for the number of vertices in $t$.

\medskip
For $\lambda>0$, we introduce the random \dfn{$\lambda$-massive spanning forest} $\lsf({G})$ whose distribution is defined, for all $f \in \F({G})$ having a set of connected components denoted by $C(f)$, by 
\begin{equation}
\proba{\lsf(G)=f} \propto  \lambda^{|C(f)|} \prod_{t\in C(f)}|t| \; . \tag{1}\label{def.lsf}
\end{equation}

\medskip
\noindent
As $\lambda \downarrow 0$, ${\rm \bf \lambda SF}(G)$ converges in distribution to ${\bf UST}(G)$. Let $\Kn$ be the complete graph with $n$ vertices and with a distinguished vertex $\origin$. Grimmett~\cite[Theorem 3]{grimmett} proved that the local limit of ${\bf UST}(\Kn)$ is a Bienaymé-Galton-Watson tree with reproduction law ${\rm Poisson}(1)$, denoted by ${\rm BGWP}(1)$, conditioned to survive forever. The resulting tree $\mathcal{T}_{0}$ coincides with the random tree obtained by growing a sequence of independent unconditioned ${\rm BGWP}(1)$ on each vertex of the semi-infinite line rooted at $\origin$. The result of Grimmett was recently extended by Nachmias--Peres~\cite[Theorem 1.1]{np22} from ${\bf UST}(\Kn)$ to ${\bf UST}(G)$, where $G$ is any simple, connected, regular graph with divergent degree.

\medskip
On the other hand, massive spanning forests have attracted recent attention, in relation with graph spectra in a series of work by Avena and Gaudillière \cite{AG1,AG2}, or with dimers and limits of near-critical trees in a work by Rey \cite{Rey}. A related model of \emph{cycle rooted spanning forests} introduced by Kenyon \cite{Kbundle} is also investigated on increasing sequences of graph in a recent work of Constantin \cite{Constantin_paper}.

\medskip 
Our aim in this paper is to generalize the result of Grimmett to $\lsf(\Kn)$. In order to state our result, we have to introduce the random tree $\mathcal{T}_{\alpha}$: for $\alpha > 0$, it is obtained by growing a sequence of independent unconditioned ${\rm BGWP}(\frac{1}{1+\alpha})$ on each vertex of a spine rooted at $\origin$ with size given by an independent random variable of law ${\rm Geom}(\frac{\alpha}{1+\alpha})$ (see \cref{fig.1} for a portrait). 

\begin{figure}
\centering
\includegraphics[width=.75\linewidth]{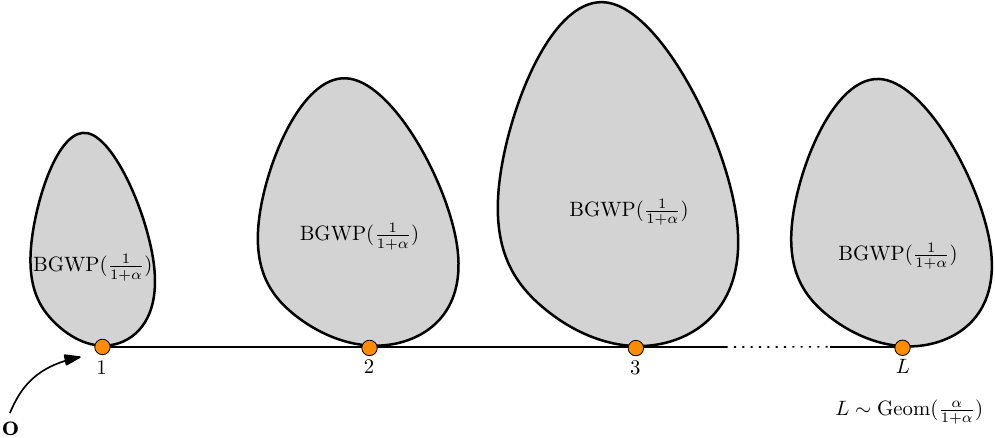}
\caption{The distribution of the local limit $\mathcal{T}_{\alpha}$.}\label{fig.1}
\end{figure}

\medskip
\noindent
Let $T_{n,\lambda}$ be the connected component of $\origin$ in  ${\rm \bf \lambda SF}(\mathbf{K}_{n})$. For a given labeled tree $T$, we denote by $\shape(T,\origin)$ the non-planar unlabeled tree obtained from $T$ and rooted at $\origin$. We endow the set of all locally finite (but possibly infinite) rooted trees with the topology inherited from the product topology. Then the convergence in distribution is called \emph{local convergence}, and coincides with the convergence of probabilities that the tree cut at a finite height is equal to a given pattern. We then have the following local convergence result, for $\lambda_n$ depending on $n$ as $n\to \infty$:

\begin{thm}[\textsc{Local limit of $\lsf(K_{n})$}]\label{thm.lllsf} The following local convergence holds:
$$
\emph{ \shape}(T_{n,\lambda_{n}},\origin) \underset{n \to \infty}{\overset{{\rm (d)}}{\longrightarrow}}
\begin{cases}
\mathcal{T}_{0}  & {\rm if \; } \lambda_{n} = o(n) \\
\mathcal{T}_{\alpha} & {\rm if \; } \lambda_{n} \sim \alpha n \\
\{\origin \} & {\rm if \; } \lambda_{n} \gg n \; .
\end{cases} 
$$
\end{thm}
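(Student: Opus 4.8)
The backbone of the argument is a factorisation of the weight in \eqref{def.lsf}. Put $Z_n(\lambda):=\sum_{f\in\F(\Kn)}\lambda^{|C(f)|}\prod_{t\in C(f)}|t|$. Since $\prod_{t\in C(f)}|t|$ counts the rootings of $f$, we have $Z_n(\lambda)=\sum_{k\ge1}\lambda^k\,\#\Fd_k(\Kn)$, and the generalised Cayley formula $\#\Fd_k(\Kn)=\binom nk\,k\,n^{\,n-k-1}$ gives, after a binomial summation, $Z_n(\lambda)=\lambda\,(n+\lambda)^{\,n-1}$. The crucial point is that, for a fixed finite set $W\ni\origin$ with $|W|=v$ and a fixed tree $T$ on $W$, the total weight of the forests of $\Kn$ whose $\origin$-component equals $T$ is $\lambda\,v\,Z_{n-v}(\lambda)$, which depends on $(T,W)$ only through $v$. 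Hence, conditionally on $\{|T_{n,\lambda}|=v\}$, the $\origin$-component is a uniform spanning tree of $\mathbf{K}_W$ with $W$ uniform among the $v$-subsets of the vertex set containing $\origin$, so that $\shape(T_{n,\lambda},\origin)\overset{d}{=}\shape(\mathbf{UST}(\mathbf{K}_v),\origin)$ conditionally on $\{|T_{n,\lambda}|=v\}$. In other words $\shape(T_{n,\lambda},\origin)$ is exactly the mixture of the laws of $\shape(\mathbf{UST}(\mathbf{K}_v),\origin)$ weighted by the law of $V_n:=|T_{n,\lambda_n}|$; since $\shape(\mathbf{UST}(\mathbf{K}_v),\origin)\to\mathcal T_0$ locally as $v\to\infty$ (Grimmett's theorem, applied with $v$ vertices), the whole statement is governed by the asymptotics of $V_n$ together with an identification of the resulting mixture.

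From $Z_n(\lambda)=\lambda(n+\lambda)^{n-1}$ one reads off, for $1\le v<n$,
\[
\proba{V_n=v}=\binom{n-1}{v-1}\,v^{\,v-1}\,\frac{\lambda_n\,(n-v+\lambda_n)^{\,n-v-1}}{(n+\lambda_n)^{\,n-1}},\qquad\text{while}\qquad \proba{V_n=n}=\Big(\tfrac{n}{\,n+\lambda_n}\Big)^{n-1}.
\]
Elementary estimates give the three regimes. If $\lambda_n=o(n)$, then $\lambda_n/(n+\lambda_n)\to0$, so $\proba{V_n=v}\to0$ for each fixed $v$ and $V_n\to\infty$ in probability. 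If $\lambda_n\sim\alpha n$, put $\beta:=\tfrac1{1+\alpha}$; for each fixed $v$ one gets $\proba{V_n=v}\to\nu_\alpha(v):=\alpha\,\tfrac{v^{\,v-1}}{(v-1)!}\,(\beta e^{-\beta})^{v}$, and, writing $T$ for the tree function $T(z)=\sum_{v\ge1}\tfrac{v^{v-1}}{v!}z^{v}=ze^{T(z)}$, the identity $\sum_{v\ge1}\tfrac{v^{v-1}}{(v-1)!}z^{v}=\tfrac{T(z)}{1-T(z)}$ evaluated at $z=\beta e^{-\beta}$ (where $T(z)=\beta$) shows $\sum_v\nu_\alpha(v)=1$; hence $V_n\to\nu_\alpha$ in distribution (indeed in total variation, by Scheffé). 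If $\lambda_n\gg n$, then $\proba{V_n=1}=\tfrac{\lambda_n}{\,n+\lambda_n}\big(1-\tfrac{1}{\,n+\lambda_n}\big)^{n-2}\to1$, so $V_n\to1$.

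Two of the three cases then follow immediately from the mixture representation. If $\lambda_n\gg n$, then $V_n\to1$ and $\mathbf{UST}(\mathbf{K}_1)=\{\origin\}$, so $\shape(T_{n,\lambda_n},\origin)\to\{\origin\}$. If $\lambda_n=o(n)$, write $[S]_h$ for the rooted tree $S$ cut at height $h$ and $g_v(\tau):=\proba{[\shape(\mathbf{UST}(\mathbf{K}_v),\origin)]_h=\tau}$ for a finite pattern $\tau$; then $\proba{[\shape(T_{n,\lambda_n},\origin)]_h=\tau}=\mathbb E[g_{V_n}(\tau)]$, and since $g_v(\tau)\to\proba{[\mathcal T_0]_h=\tau}$ (Grimmett), $g_v\in[0,1]$, and $V_n\to\infty$, bounded convergence yields the limit $\mathcal T_0$. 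If $\lambda_n\sim\alpha n$, tightness of $(V_n)$ and $\proba{V_n=v}\to\nu_\alpha(v)$ give $\mathbb E[g_{V_n}(\tau)]\to\sum_v\nu_\alpha(v)g_v(\tau)=\proba{[\shape(\mathbf{UST}(\mathbf{K}_V),\origin)]_h=\tau}$ with $V\sim\nu_\alpha$; so it only remains to identify this $\nu_\alpha$-mixture of spanning-tree shapes with $\mathcal T_\alpha$.

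This identification is the combinatorial core of the proof, and is where I expect the real work to lie. The plan is to show that for every finite rooted tree $t$ with $v$ vertices one has $\proba{\shape(\mathcal T_\alpha,\origin)=t}=\nu_\alpha(v)\,\proba{\shape(\mathbf{UST}(\mathbf{K}_v),\origin)=t}=\tfrac{\alpha v}{|\mathrm{Aut}(t)|}(\beta e^{-\beta})^{v}$; since both $\mathcal T_\alpha$ and the mixture are a.s.\ finite, equality of these probabilities for all $t$ forces equality in distribution, hence of all height-$h$ truncations. One computes $\proba{\shape(\mathcal T_\alpha,\origin)=t}$ by summing over possible spines: $\mathcal T_\alpha$ is a path of $\mathrm{Geom}(\tfrac{\alpha}{1+\alpha})$ length carrying independent unconditioned $\mathrm{Poisson}(\beta)$ Galton--Watson bushes at its vertices, so grouping according to which root-to-vertex path $\gamma$ of $t$ is realised by the spine, the geometric weight $\alpha\beta^{m+1}$ of a length-$m$ spine cancels against the factor $\beta^{\,v-m-1}$ produced by the non-root edges of the bushes $s_0,\dots,s_m$ into which $\gamma$ cuts $t$; using that a $\mathrm{Poisson}(\beta)$ Galton--Watson tree has shape $s$ with probability $e^{-\beta|s|}\beta^{|s|-1}/|\mathrm{Aut}(s)|$, together with the elementary fact $|\mathrm{Aut}(t,\gamma)|=\prod_i|\mathrm{Aut}(s_i)|$, the sum collapses to $\alpha(\beta e^{-\beta})^{v}$ times $v/|\mathrm{Aut}(t)|$, where the $v$ counts the root-to-vertex paths of $t$. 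The main obstacle is keeping this automorphism and multinomial bookkeeping consistent while passing between the spine-marked and unmarked pictures; equivalently, the content of this step is the classical decomposition of a uniform doubly-rooted labelled tree into an independent chain, along the path between its two roots, of uniform (i.e.\ $\mathrm{Poisson}$ Galton--Watson conditioned on size) trees, of which the displayed identity is a reweighted form.
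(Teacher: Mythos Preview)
Your argument is correct and proceeds along a genuinely different route from the paper. The paper never conditions on the size of the $\origin$-component; instead it uses the determinantal structure of $\lsf$ (the transfer current matrix on $\Kn$) to obtain the exact inclusion probability $\proba{t\subset\lsf(\Kn)}=|t|/(n+\lambda)^{|t|-1}$, then by inclusion--exclusion derives a closed finite-$n$ formula for $\proba{\shape(\lsf(\Kn),\origin)_{\le h}=t}$, and finally matches the limit with an independent height-$h$ computation for $\mathcal{T}_\alpha$ via a Burnside/orbit-stabiliser argument. Your approach replaces all of this by the single observation that, thanks to the factorisation $Z_n(\lambda)=\lambda(n+\lambda)^{n-1}$, the conditional law of the component given its size $V_n=v$ is exactly $\mathbf{UST}(\mathbf K_v)$; the theorem then reduces to the asymptotics of the integer-valued variable $V_n$ together with Grimmett's result (used only in the $\lambda_n=o(n)$ regime) and the identification of the $\nu_\alpha$-mixture with $\mathcal T_\alpha$. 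That identification, which you flag as the crux, does go through exactly as you outline: with $\beta=\tfrac1{1+\alpha}$ one has $\proba{\shape(\mathrm{BGWP}(\beta))=s}=e^{-\beta|s|}\beta^{|s|-1}/|\mathrm{Aut}(s)|$, the spine decomposition gives $\proba{\mathcal T_\alpha^\bullet\cong(t,u)}=\alpha(\beta e^{-\beta})^{|t|}/|\mathrm{Aut}(t,u)|$, and summing over the $\mathrm{Aut}(t)$-orbits of $u$ via orbit--stabiliser yields $\alpha|t|(\beta e^{-\beta})^{|t|}/|\mathrm{Aut}(t)|=\nu_\alpha(|t|)\,\proba{\shape(\mathbf{UST}(\mathbf K_{|t|}),\origin)=t}$.

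What each approach buys: yours is shorter and more elementary (no determinantal kernel, no finite-$n$ height-$h$ formula), and makes transparent that the whole phase transition is governed by a single scalar, the component size, while recycling Grimmett's theorem as a black box for the subcritical regime. The paper's route is self-contained (it reproves the $\lambda_n=o(n)$ case rather than invoking it), produces an exact finite-$n$ expression for the height-$h$ law, and showcases the determinantal machinery, which is the natural tool if one wants to move beyond $\Kn$ to graphs where Cayley-type enumeration is unavailable.
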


\begin{remark} 
A corollary of \Cref{thm.lllsf} is that $\mathcal{T}_{\alpha}$ is unimodular.
\end{remark}
\begin{remark}
As shown in \cite[Eq.~19]{CAGM}, the number of connected components has mean $\frac{\lambda+1}{\lambda+n}n$ and is concentrated. When $\lambda_{n}\sim \alpha n$, this gives asymptotically $\frac{\alpha}{1+\alpha}n$. This result can be informally recovered by the computation of the expectation of the inverse of the total progeny of $\mathcal{T}_{\alpha}$, which is equal to $\frac{\alpha}{1+\alpha}$. 
\end{remark}

\medskip

\noindent \textbf{Acknowledgements}. We thank Eleanor Archer and Lucas Rey for insightful discussions. The work of M.A.~was supported by the ERC Consolidator Grant SuperGRandMA (Grant No.~101087572). N.E.~was partially supported by the CNRS grant RT 2179, MAIAGES. M.A.~et N.E.~were partially supported by the CNRS grant RT 2173 Mathématiques et Physique, MP.

\section{Determinantal properties of massive spanning forests on a graph}

Let $H$ be an undirected graph without self-loops but with possibly multiple edges, denote by $V(H)$ and $E(H)$ respectively the vertex set and the edge set of $H$. Let also $n=|V(H)|$ be the \dfn{size} of the graph. Given two vertices $u,v \in V(H)$, we shall denote by $u \sim v$ the fact that there exists an edge connecting $u$ to $v$, and by $u \sim_{e} v$ the fact that a specific edge $e \in E(H)$ connects $u$ to $v$. For a vertex $v \in V(H)$, we shall denote by $d(v)$ the degree of $v$. Given some labeling of the vertices 
$\left(v_{i} \right)_{i=1}^{n}$, the \dfn{adjacency matrix} $A_H$ is defined by

\begin{equation}
\forall i,j=1,\ldots,n \quad \left( A_H\right)_{ij} \overset{\rm def}{=} 
|\lbrace e \in E(H ) \sut  v_i\sim_{e} v_j \rbrace |\,  \, 
\end{equation}

\medskip
\noindent
and the \dfn{graph Laplacian} $\Delta_{H}$ is a symmetric, positive semi-definite matrix defined by

\begin{equation}
\forall i,j=1,\ldots,n \quad (\Delta_{H})_{ij}\overset{\rm def}{=}d(v_{i})\delta_{ij}-\left( A_H\right)_{ij} \, .
\end{equation}
We denote by $P_{H}$ the \dfn{characteristic polynomial} of $\Delta_{H}$ defined by
\begin{equation}\label{eq.carpol}
P_{H}(\lambda)\overset{\rm def}{=}\det (\Delta_{H}+\lambda I_{n}) \; .
\end{equation}

\medskip


This characteristic polynomial turns out be the partition function of spanning forests according to their number of connected components, as shown by the following celebrated Kirchhoff's matrix-forest theorem.
\begin{theorem}[\textsc{Kirchhoff's or Matrix-Forest Theorem \cite{Kir}}]\label{thm.mf}
Let $H$ be a graph of size $n$ without self-loops. Then
\begin{equation}
P_{H}(\lambda) = \sum_{k=1}^n \lambda^k |\Fd_k(H)|
    = \sum_{k=1}^n \lambda^k \left[\sum_{f\in \F_k(H)} \prod_{t\in C(f)}|t|\right].
\end{equation}
\end{theorem}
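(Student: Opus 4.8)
The plan is to prove Kirchhoff's Matrix-Forest Theorem by expanding the determinant $\det(\Delta_H + \lambda I_n)$ and identifying each surviving term with a rooted spanning forest. First I would note that the identity to be proven is polynomial in $\lambda$, so it suffices to compare coefficients of $\lambda^k$ on both sides, or equivalently to give a single combinatorial expansion. The cleanest route is the all-minors (generalized) matrix-tree theorem: writing $P_H(\lambda)=\det(\Delta_H+\lambda I_n)$ and using multilinearity of the determinant in the ``diagonal'' part, expand $\Delta_H + \lambda I_n$ by splitting each diagonal entry as $(d(v_i)+\lambda) - (\text{off-diagonal contributions absorbed elsewhere})$; more precisely, pick for each $i$ whether the $\lambda$ from the $i$-th diagonal entry is ``used'' or not. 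This yields
\begin{equation}
\det(\Delta_H + \lambda I_n) = \sum_{S \subseteq \{1,\dots,n\}} \lambda^{|S|} \det\big( (\Delta_H)_{S^c, S^c} \big),
\end{equation}
where $(\Delta_H)_{S^c,S^c}$ is the principal submatrix of $\Delta_H$ obtained by deleting the rows and columns indexed by $S$, and the empty determinant is $1$.

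The second step is to evaluate the principal minor $\det\big((\Delta_H)_{S^c,S^c}\big)$. I would invoke (or quickly reprove via Cauchy--Binet applied to an incidence-matrix factorization $\Delta_H = N N^{\mathsf T}$, after orienting the edges arbitrarily) the all-minors matrix-tree theorem, which states that this minor equals the number of spanning forests of $H$ in which each connected component contains exactly one vertex of $S$. Equivalently, it counts spanning forests $f$ with exactly $|S|$ components, together with a choice of root in $S$ for each component — but with the constraint that the roots are precisely the prescribed set $S$, one per tree. Summing over all $S$ with $|S|=k$, the constraint ``one chosen root per component, roots forming the set $S$'' ranges over all ways of selecting one root per component, so
\begin{equation}
\sum_{|S|=k} \det\big((\Delta_H)_{S^c,S^c}\big) = \sum_{f \in \F_k(H)} \prod_{t \in C(f)} |t| = |\Fd_k(H)|,
\end{equation}
since a forest with components $t_1,\dots,t_k$ admits $\prod_i |t_i|$ rootings. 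Combining with the determinant expansion from the first step gives $P_H(\lambda) = \sum_{k=1}^n \lambda^k |\Fd_k(H)|$, which is the claim (note $k=0$ contributes nothing since $\det \Delta_H = 0$, a forest spanning $n$ vertices having at least one component, and indeed the $k=0$ term $\det \Delta_H$ vanishes as $\Delta_H$ is singular — the all-ones vector lies in its kernel).

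The main obstacle is establishing the all-minors matrix-tree identity for the principal minor $\det((\Delta_H)_{S^c,S^c})$ with full rigor, including the sign bookkeeping: after factoring $\Delta_H = N N^{\mathsf T}$ with $N$ the signed vertex-edge incidence matrix of an arbitrary orientation, Cauchy--Binet expresses the minor as a sum over $(n-|S|)$-subsets of edges of squared $(n-|S|)\times(n-|S|)$ minors of $N_{S^c}$, and one must check that such a minor is $\pm 1$ exactly when the chosen edge set forms a spanning forest with one vertex of $S$ per component (and $0$ otherwise). This is the classical argument but it requires care; an alternative is a self-contained induction on $|E(H)|$ via deletion–contraction, expanding along the recursion $|\Fd_k(H)| = |\Fd_k(H \setminus e)| + |\Fd_k(H / e)|$ for a non-loop edge $e$ and matching it with the corresponding linear recursion for $\det(\Delta_H + \lambda I_n)$ obtained by row/column operations. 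Either way, once the all-minors statement is in hand the rest is immediate from multilinearity and the counting of rootings.
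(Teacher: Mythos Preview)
The paper does not actually prove this theorem: it is stated as the classical Kirchhoff matrix-forest theorem, attributed to \cite{Kir}, and used as a black box. There is therefore no proof in the paper to compare against.

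Your proposal is correct and is the standard argument. The multilinearity expansion
\[
\det(\Delta_H+\lambda I_n)=\sum_{S\subseteq\{1,\dots,n\}}\lambda^{|S|}\det\big((\Delta_H)_{S^c,S^c}\big)
\]
is exactly right (split each column as the $\Delta_H$-column plus $\lambda e_i$, then expand along the columns in $S$). The identification of the principal minor $\det((\Delta_H)_{S^c,S^c})$ with the number of spanning forests having exactly one vertex of $S$ per component is the all-minors matrix-tree theorem, and your sketch via $\Delta_H=NN^{\mathsf T}$ and Cauchy--Binet is the canonical route; the $\pm1/0$ dichotomy for square minors of the signed incidence matrix is precisely the step that requires care, as you note. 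Summing over $|S|=k$ then counts each $f\in\F_k(H)$ with multiplicity $\prod_{t\in C(f)}|t|$, giving $|\Fd_k(H)|$. The deletion--contraction alternative you mention is also valid and self-contained. Nothing is missing.
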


\noindent
Kenyon generalized this result in~\cite{Kbundle} where he described the partition function of {cycle rooted spanning forests} in terms of {bundle} Laplacian.

\bigskip
A consequence of this fundamental theorem is that the uniform spanning trees, and more generally the $\lsf$ model of spanning forests, are determinantal. This was first discovered by Burton and Pemantle in~\cite{BP} for the uniform spanning tree case, see also the book by Lyons and Peres~\cite[Chapter 4]{LP} for an entire chapter devoted to the topic. As for spanning forests, determinantal formulas for correlations seem to have appeared sporadically. They are described in the PhD thesis of Chang \cite[Section 5.2]{Chang} who considered the equivalent framework of spanning trees rooted at a cemetery point. Concerning the set of roots, determinantal formulas are stated by Avena and Gaudillière in~\cite{AG2}. In this work, we need an expression of presence or absence of edges which is given by a determinant, generalizing~\cite{BP}.

\medskip
For $\lambda \notin {\rm Sp}(\Delta_{H})$, consider the \emph{resolvent} matrix (also called \emph{massive Green's function}) 
\begin{equation}\label{eq.gfdef}
R_\lambda=(\Delta_{H}+\lambda I_n)^{-1}.
\end{equation} Using this operator on $V(H)$, we define another operator on $E(H)$ by first fixing an arbitrary orientation of the edges, so that every edge $e \in E(H)$ has an origin vertex $e_{-}$ and a target vertex $e_{+}$. Then for two edges $e,f\in E(H)$ we define
\begin{equation}\label{eq.kg}
  K_\lambda (e,f) = R_\lambda (e_{-},f_{-}) + R_\lambda (e_{+},f_{+}) - R_\lambda (e_{-},f_{+}) - R_\lambda (e_{+},f_{-}).
\end{equation}
Seen as a matrix, $K$ is called \dfn{transfer current matrix}. This statement says that the $\lsf$ model is determinantal with kernel $K_\lambda$. 
\begin{prop}\label{prop.ag}
  Let $e_1,\dots,e_k, e_{k+1}, \dots, e_p$ be $p$ distinct edges of $H$, let $\lambda>0$. Then
  \begin{equation}
  	\PP(e_1,\dots,e_k \in \lsf(H), \ e_{k+1},\dots,e_p \notin  \lsf(H)) = \det M
  \end{equation}
  where $M$ is the $p\times p$ matrix with entries
  \begin{equation}
  	M_{i,j} = \begin{cases}
  		K_{\lambda}(e_i,e_j) & \text{ if } i\leq k \\
  		\delta_{i,j} - K_{\lambda}(e_i,e_j) & \text{ if } k<i\leq p. 
  	\end{cases}
  \end{equation}
\end{prop}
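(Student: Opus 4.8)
The plan is to deduce the statement from the classical (weighted) transfer current theorem for uniform spanning trees, via the \emph{ghost vertex} (cemetery) construction. Let $H^{+}$ be the weighted graph obtained from $H$ by adding one extra vertex $g$, joined to every vertex $v\in V(H)$ by an edge of conductance $\lambda$, all original edges of $H$ keeping conductance $1$. Since $g$ is adjacent to every vertex, $H^{+}$ is connected, and its weighted Laplacian, after deleting the row and column indexed by $g$, is exactly $\Delta_{H}+\lambda I_{n}$; equivalently, $R_{\lambda}$ is the Green's function of $H^{+}$ with $g$ grounded. All the edges $e_{1},\dots,e_{p}$ belong to $E(H)\subseteq E(H^{+})$ and have conductance $1$.

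First I would check that $\mathbf{UST}(H^{+})\cap E(H)$ — sample a weighted uniform spanning tree of $H^{+}$ and forget the edges incident to $g$ — has the same law as $\lsf(H)$. Removing $g$ from a spanning tree $T$ of $H^{+}$ leaves a spanning forest $f$ of $H$, and since $T$ is a tree, each connected component of $f$ is attached to $g$ by exactly one edge; conversely, a spanning forest of $H$ together with a choice of one root vertex per component determines a unique spanning tree of $H^{+}$. Thus spanning trees of $H^{+}$ are in bijection with rooted spanning forests of $H$, and a tree $T$ with $k$ components in $H$ has weight $\prod_{e\in T}c(e)=\lambda^{k}$; summing over the $\prod_{t\in C(f)}|t|$ choices of roots recovers exactly the weight $\lambda^{|C(f)|}\prod_{t\in C(f)}|t|$ of \eqref{def.lsf}. (This is essentially the cemetery-point point of view of \cite{Chang}.)

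Next I would identify the kernel. The transfer current theorem, in its weighted form, says $\mathbf{UST}(H^{+})$ is a determinantal process on $E(H^{+})$ with kernel the transfer current matrix $Y_{H^{+}}$; restricting a determinantal process to a subset of its ground set keeps it determinantal with the restricted kernel, so it is enough to see $Y_{H^{+}}(e,f)=K_{\lambda}(e,f)$ for $e,f\in E(H)$ (neither incident to $g$). With $g$ grounded, the voltage produced by a unit current entering at $e_{-}$ and leaving at $e_{+}$ is $u\mapsto R_{\lambda}(u,e_{-})-R_{\lambda}(u,e_{+})$, so the current flowing through the unit-conductance edge $f$ is $R_{\lambda}(f_{-},e_{-})-R_{\lambda}(f_{-},e_{+})-R_{\lambda}(f_{+},e_{-})+R_{\lambda}(f_{+},e_{+})$, which equals $K_{\lambda}(e,f)$ by symmetry of $R_{\lambda}$. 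Together with Step 2 this already gives the inclusion case, $\PP(e_{1},\dots,e_{p}\in\lsf(H))=\det\big(K_{\lambda}(e_{i},e_{j})\big)_{i,j\le p}$.

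Finally I would pass to the mixed inclusion/exclusion statement. Writing $A=\{e_{1},\dots,e_{k}\}$, $B=\{e_{k+1},\dots,e_{p}\}$ and $\omega=\lsf(H)$, inclusion--exclusion gives $\PP(A\subseteq\omega,\ B\cap\omega=\emptyset)=\sum_{S\subseteq B}(-1)^{|S|}\PP(A\cup S\subseteq\omega)=\sum_{S\subseteq B}(-1)^{|S|}\det\big(K_{\lambda}(e,f)\big)_{e,f\in A\cup S}$, and a purely algebraic determinant identity — expand the last $|B|$ rows of $M$ by multilinearity into their ``identity part'' and their ``$K_{\lambda}$ part'', then do a Laplace expansion along the identity rows — shows this sum is exactly $\det M$. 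The only points requiring genuine care are the root bookkeeping and the $\lambda^{|C(f)|}\prod|t|$ weight in Step 2, and the orientation/sign bookkeeping in the electrical computation of Step 3, which I expect to be the main (though not deep) obstacle; the rest is standard. A more self-contained alternative would be to redo the Burton--Pemantle induction directly on $\lsf(H)$ by conditioning on an edge being present or absent, but the weights $\prod_{t}|t|$ transform awkwardly under contraction, whereas on $H^{+}$ contraction and deletion are the usual clean operations, which is why the detour through $H^{+}$ is the natural route.
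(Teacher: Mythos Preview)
Your argument is correct, but it is genuinely different from the paper's. The paper does not pass through the cemetery graph $H^{+}$ or invoke the Burton--Pemantle transfer current theorem as a black box; instead it works intrinsically on $H$. It first treats the pure exclusion case $k=0$: by the Matrix--Forest Theorem the probability that $e_{1},\dots,e_{p}$ are all absent is the ratio $P_{H\setminus\{e_{1},\dots,e_{p}\}}(\lambda)/P_{H}(\lambda)$, and writing $\Delta_{H\setminus\{e_{1},\dots,e_{p}\}}=\Delta_{H}-BB^{T}$ with $B$ the $n\times p$ signed incidence matrix of the selected edges, the Weinstein--Aronszajn (Sylvester) identity turns this ratio into $\det(I_{p}-B^{T}R_{\lambda}B)=\det(I_{p}-K_{\lambda})$. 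The mixed case is then obtained, as in your last step, by inclusion--exclusion and multilinearity. So the paper starts from exclusions and works up, while you start from inclusions (via Burton--Pemantle on $H^{+}$) and work down; both meet at the same inclusion--exclusion identity. Your route is more conceptual --- it explains \emph{why} $K_{\lambda}$ is a transfer current, and the paper itself mentions Chang's cemetery-point viewpoint as an alternative --- but it imports the weighted Burton--Pemantle theorem as an external ingredient. The paper's route is shorter and self-contained, needing only Kirchhoff and a one-line determinant identity, at the cost of making the electrical interpretation of $K_{\lambda}$ less visible.
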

\begin{proof}
	We start with the case $k=0$, that is, we want to compute $\PP(e_{1},\dots,e_p \notin \lsf(H))$. Let $H\setminus \{e_1,\dots,e_p\}$ be the graph obtained by deleting the edges $\{e_1,\dots,e_p\}$ from $E(H)$. By Theorem~\ref{thm.mf}, this may be expressed as
	\begin{equation}
		   \label{eq.determ}
		    \frac{P_{H \setminus \{e_1,\dots,e_p\}}(\lambda)}{P_{H}(\lambda)} = \frac{\det \left(\Delta_{H \setminus \{e_1,\dots, e_p\}} + \lambda I_n\right)}{\det \left(\Delta_{H} + \lambda I_n\right)}.
	\end{equation}
	The two involved matrices can be related in the following way. Recall that we fixed an arbitrary orientation of every edge. For any $1\leq i \leq p$, let $b_i$ be the vector of dimension $n=|V(H)|$ with entries $-1$ at the origin of $e_i$, $+1$ at the tip of $e_i$, and $0$ otherwise. Then we have
	\begin{equation}
		\Delta_{H \setminus \{e_1,\dots, e_p\}}  = \Delta_{H} - \sum_{i=1}^p b_i b_i^T = \Delta_H - B B^T 
	\end{equation}
	where $B$ is the $n\times p$ matrix with columns $b_i$. Therefore, using the Weinstein–Aronszajn identity,
	\begin{equation}
		\begin{split}
			\det\left(\Delta_{H \setminus \{e_1,\dots, e_p\}}\right) = & \det\left(\Delta_H + \lambda I_n - B B^T \right) \\
			= & \det \left(\Delta_H + \lambda I_n\right) \det\left(I_n - R_\lambda B B^T\right) \\
			= &  \det \left(\Delta_H + \lambda I_n\right)  \det\left(I_p - B^T R_\lambda B \right)
		\end{split}
	\end{equation}
	The entries of $B^T R_\lambda B $ can be seen to be equal to that of $K_\lambda$ by a direct computation. This concludes the proof in the case $k=0$.
	
\medskip
\noindent	
For $k\geq 1$, we can write the probability of the event $\{e_1,\dots,e_k \in \lsf{(H)}, \ e_{k+1},\dots,e_p \notin \lsf{(H)}\}$ using inclusion-exclusion as a combination of probabilities of $\{e_{i_1}, \dots, e_{i_l}, e_{k+1}, \dots, e_p \notin \lsf{(H)}\}$, and those can be expressed via the first case. It is direct to check that multi-linearity of the determinants implies the generic formula.
\end{proof}
 
\clearpage

\section{A formula for the inclusion probability of a tree on $\mathbf{K}_n$}$_{}$

\begin{figure}[!hbtp]
\includegraphics[width=.33\linewidth]{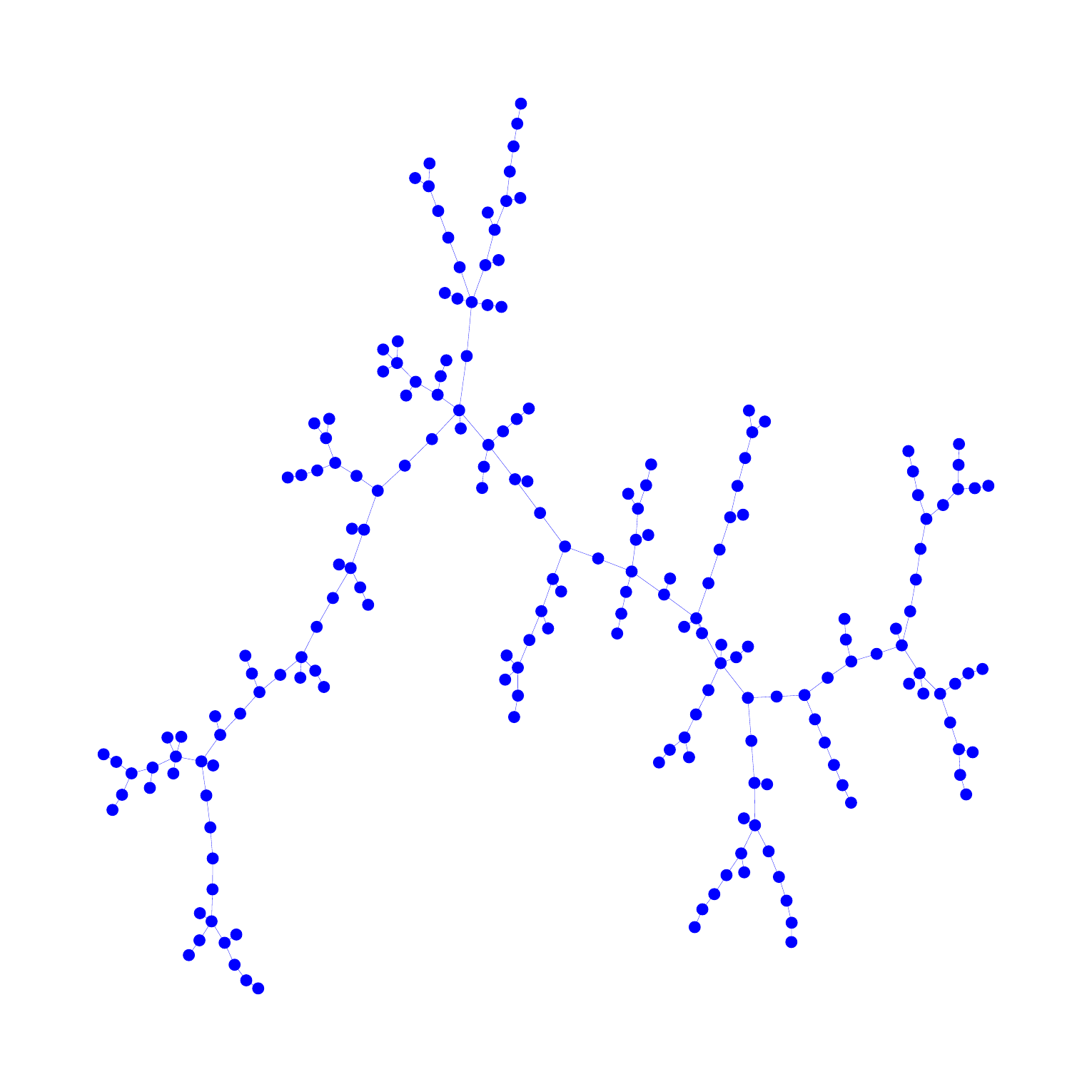}
\includegraphics[width=.33\linewidth]{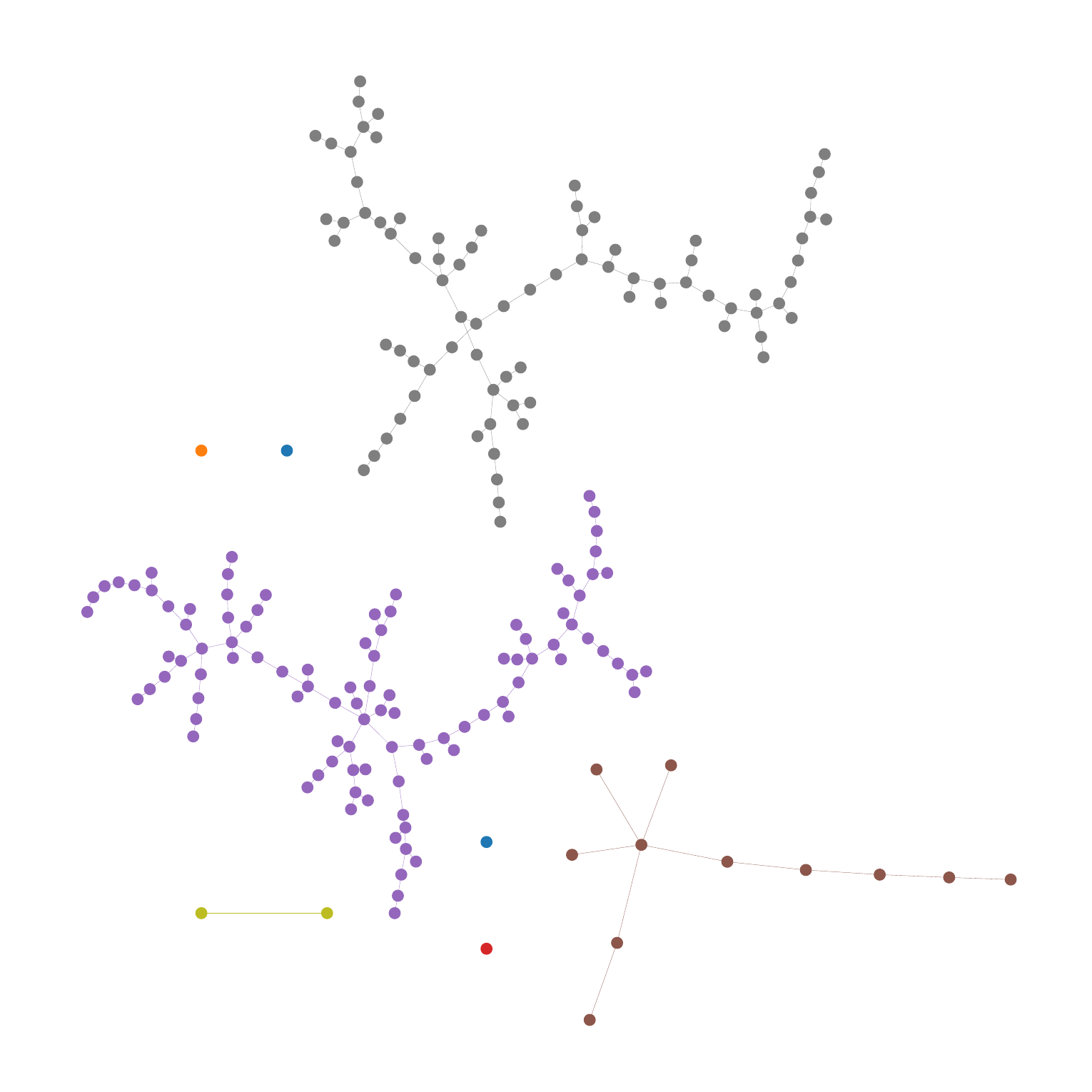}
\includegraphics[width=.33\linewidth]{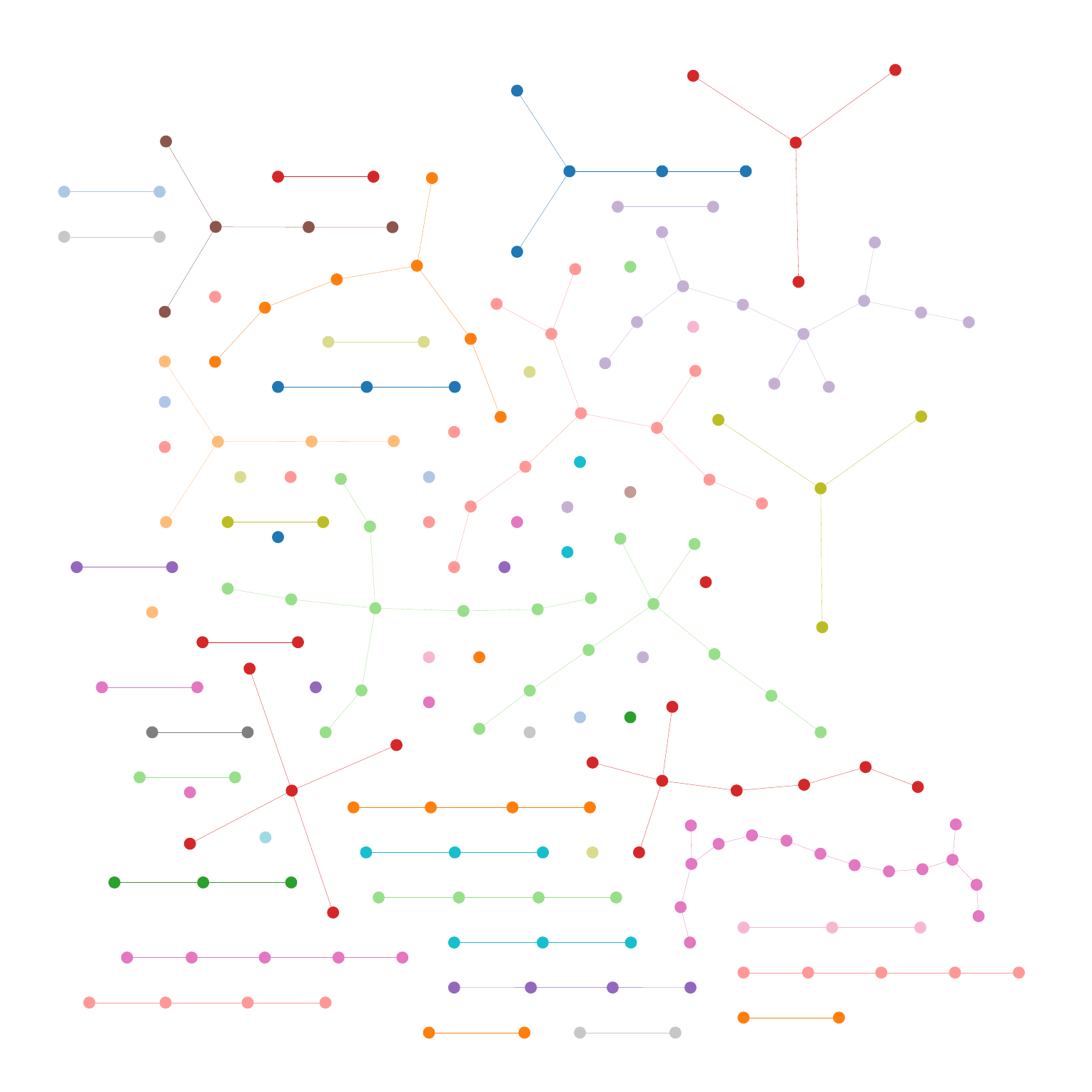}
\caption{Samples of ${\rm \bf \lambda SF}(\mathbf{K}_{200})$ obtained via Wilson's algorithm with a uniform, positive killing rate $r=1-\mu=\frac{\lambda}{\lambda+n-1}$. From left to right: $r=0$ (i.e.~a sample of ${\bf UST}(\mathbf{K}_{200})$), $r=\frac{1}{40}$ and $r=\frac{1}{3}$.}
\label{fig.limvarl}
\end{figure}

\medskip
\noindent
Let $\Kn$ denote the complete graph on $n$ vertices.

\begin{prop}[\textsc{Resolvent and transfer current matrix}] \label{prop.rk}
	The resolvent and transfer current matrix of $\Kn$ are given by
	\begin{equation}
		R_\lambda(i,j) = \begin{cases}
		    \frac{1+\lambda}{\lambda(n+\lambda)} &\text{ if } i=j \\
		   \frac{1}{\lambda(n+\lambda)} &\text{ if } i \neq j
		\end{cases} 
	\end{equation}
and
	\begin{equation}
		K_\lambda(e,f) = 
			\frac{1}{(n+\lambda)}\left(\mathbf{1}_{e_{-}=f_{-}}+\mathbf{1}_{e_{+}=f_{+}}- \mathbf{1}_{e_{-}=f_{+}}-\mathbf{1}_{e_{+}=f_{-}}\right)
	\end{equation}
	
\end{prop}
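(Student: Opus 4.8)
The plan is to compute the Laplacian of $\Kn$ explicitly, invert $\Delta_{\Kn}+\lambda I_n$ by a rank‑one update, read off the entries of $R_\lambda$, and then substitute into \eqref{eq.kg}.

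First I would note that every vertex of $\Kn$ has degree $n-1$ and every off‑diagonal entry of the adjacency matrix equals $1$, so that $\Delta_{\Kn}=nI_n-J_n$, where $J_n$ denotes the $n\times n$ all‑ones matrix; hence $\Delta_{\Kn}+\lambda I_n=(n+\lambda)I_n-J_n$. The matrix $J_n$ has eigenvalue $n$ on the line spanned by the all‑ones vector $\mathbf{1}$ and eigenvalue $0$ on $\mathbf{1}^{\perp}$, so $\mathrm{Sp}(\Delta_{\Kn})=\{0,n\}$; in particular for every $\lambda>0$ we have $\lambda\notin{\rm Sp}(\Delta_{\Kn})$ and $R_\lambda$ in \eqref{eq.gfdef} is well defined.

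Next I would invert $\Delta_{\Kn}+\lambda I_n$ by decomposing against the orthogonal projectors $\tfrac1n J_n$ onto $\mathbf{1}$ and $I_n-\tfrac1n J_n$ onto $\mathbf{1}^{\perp}$, on which $\Delta_{\Kn}+\lambda I_n$ acts respectively as multiplication by $\lambda$ and by $n+\lambda$ (equivalently, one may invoke the Sherman–Morrison formula for the rank‑one perturbation $-J_n$). This gives
\[
R_\lambda=\frac{1}{\lambda}\cdot\frac1n J_n+\frac{1}{n+\lambda}\left(I_n-\frac1n J_n\right)=\frac{1}{n+\lambda}\,I_n+\frac{1}{\lambda(n+\lambda)}\,J_n,
\]
and reading off the entries yields $R_\lambda(i,i)=\tfrac{1}{n+\lambda}+\tfrac{1}{\lambda(n+\lambda)}=\tfrac{1+\lambda}{\lambda(n+\lambda)}$ and $R_\lambda(i,j)=\tfrac{1}{\lambda(n+\lambda)}$ for $i\neq j$, as claimed.

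Finally, writing $R_\lambda=cI_n+dJ_n$ with $c=\tfrac{1}{n+\lambda}$ and $d=\tfrac{1}{\lambda(n+\lambda)}$, the contribution of the $dJ_n$ part to the alternating sum $K_\lambda(e,f)=R_\lambda(e_-,f_-)+R_\lambda(e_+,f_+)-R_\lambda(e_-,f_+)-R_\lambda(e_+,f_-)$ is $d+d-d-d=0$, so only the $cI_n$ part survives and
\[
K_\lambda(e,f)=\frac{1}{n+\lambda}\left(\mathbf{1}_{e_-=f_-}+\mathbf{1}_{e_+=f_+}-\mathbf{1}_{e_-=f_+}-\mathbf{1}_{e_+=f_-}\right).
\]
I do not expect any genuine obstacle here: the only points needing care are the correct normalization of the rank‑one projector $\tfrac1n J_n$ in the inversion step and the observation that the all‑ones component of $R_\lambda$ cancels in the alternating sum defining $K_\lambda$.
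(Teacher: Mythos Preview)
Your proof is correct and more direct than the paper's. The paper takes a probabilistic route: it expresses $R_\lambda$ in terms of the Green's function $G^\mu(x,y)=\sum_{k\geq 0}\mu^k \mathbf{P}_x(X_k=y)$ of the simple random walk on $\Kn$ killed at rate $1-\mu$, with $\mu=\frac{n-1}{n-1+\lambda}$, and then computes $G^\mu(1,1)$ and $G^\mu(1,2)$ by setting up and solving a two-term linear recursion for the transition probabilities $\mathbf{P}_1(X_k=1)$ and $\mathbf{P}_1(X_k=2)$, summing the resulting geometric-type series via an Abel transform. Your approach bypasses all of this by exploiting the rank-one structure $\Delta_{\Kn}+\lambda I_n=(n+\lambda)I_n-J_n$ and inverting via the spectral projectors onto $\mathbf{1}$ and $\mathbf{1}^\perp$. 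What your approach buys is brevity and transparency; what the paper's approach buys is an explicit connection to the killed random walk, which underlies the Wilson-type sampling of $\lsfkn$ they illustrate. The passage from $R_\lambda$ to $K_\lambda$ is essentially the same in both proofs, though your observation that the $J_n$ component of $R_\lambda$ cancels in the alternating sum is a particularly clean way to phrase it.
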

\rproof

We start with a preparatory computation of the Green's function of a uniform random walk $X^{\mu}_{k}$ on $\Kn$ killed at some positive rate $1-\mu$\footnote{This also provides an easy recipe to sample ${\rm \bf \lambda SF}(\mathbf{K}_{n})$, see \Cref{fig.limvarl}.}, which is equal to $G^{\mu}(x,y)=\sum_{k \geq 0}\mu^{k}\mathbf{P}_{x}(X_{k}=y)$, where $X_{k}$ denotes the uniform RW. Using the notation of \cref{eq.gfdef}, and the fact that the transition matrix of the random walk $P$ is related to the laplacian via $\Delta=(n-1)(I-P)$, we get 
\begin{equation}\label{eq.lgm}
R_{\lambda}(x,y)=\frac{\mu}{n-1}G^{\mu}(x,y) \quad  \mbox{where} \; \mu=\frac{n-1}{n-1+\lambda} \; .
\end{equation}
 By symmetry of the problem, we only need to compute $G^\mu(1,1)$ and $G^\mu(1,2)$. We need now to determine the distribution of $X_{k}$ conditional on $X_{0}=1$. Denoting by $x_{k}\overset{\rm def}{=} \mathbf{P}_{1}(X_{k}=1)$ and $y_{k}\overset{\rm def}{=} \mathbf{P}_{1}(X_{k}=2)$, usual Markov properties show that for $k\geq 0$,
$$
\begin{cases}
	x_{k+1} = y_k, \\
	y_{k+1} = \frac{n-2}{n-1}y_{k}+\frac{1}{n-1}x_k,
	\end{cases}
$$
with $(x_{0}, y_{0})=(1,0)$. From this recursion we deduce that, for all $k\geq 0$, $y_{k+1}-y_{k}=-(-\frac{1}{n-1})^{k+1}$ and using Abel transform, we get $G^\mu(1,2)= \sum_{k\geq 1}\mu^{k} y_{k}=\frac{n-1+\lambda}{\lambda(n+\lambda)}$, giving, via \cref{eq.lgm}, $R_{\lambda}(1,2)=\frac{1}{\lambda(n+\lambda)}$. Since $x_{k+1}=y_{k}$, $G^\mu(1,1)=1+\mu G^\mu(1,2)$, and $R_{\lambda}(1,1)=\frac{\mu}{n-1}+\mu \, R_{\lambda}(1,2)$, which gives the formulas for $R_{\lambda}$.

\noindent
From \cref{eq.kg}, we can write
\begin{equation}
	K_\lambda(e,f) = \begin{cases}
		2 \left(R_\lambda(x,x)-R_\lambda(x,y)\right)\text{ if } e=f \\
		\left(R_\lambda(x,x)-R_\lambda(x,y)\right)\left(\mathbf{1}_{e_{-}=f_{-}}+\mathbf{1}_{e_{+}=f_{+}}- \mathbf{1}_{e_{-}=f_{+}}-\mathbf{1}_{e_{+}=f_{-}}\right) \text{ if } e \neq f
	\end{cases}
	\end{equation}
	
\noindent
which leads to the formulas for $K_{\lambda}$.

\Qed

\medskip
\noindent
We can now compute the probability that a given tree is included in $\lsf(\Kn)$:
\begin{lem}[\textsc{Inclusion probability}]\label{inc.lem}
Let $t$ be a tree with vertices labeled by distinct integers in $\{1,\dots, n\}$, then
$$
\proba{t \subset \lsf(\Kn) }= \frac{|t|}{(n+\lambda)^{|t|-1}} \; .
$$
\end{lem}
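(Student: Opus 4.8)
The plan is to invoke \Cref{prop.ag} with the $|t|-1$ edges of $t$ all required to be \emph{present} and no edge excluded, i.e.\ with $k=p=|t|-1$. Enumerating the edges of $t$ as $e_1,\dots,e_{|t|-1}$, the event $\{t\subset \lsf(\Kn)\}$ is exactly $\{e_1,\dots,e_{|t|-1}\in \lsf(\Kn)\}$ (there is no cycle constraint to worry about, since any spanning forest is acyclic). Hence \Cref{prop.ag} gives $\proba{t\subset \lsf(\Kn)}=\det M$ with $M_{i,j}=K_\lambda(e_i,e_j)$.

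Next I would rewrite $M$ using the explicit formula for $K_\lambda$ on $\Kn$ from \Cref{prop.rk}. Letting $b_e\in\R^n$ denote the signed incidence vector of an oriented edge $e$ (entry $-1$ at $e_-$, $+1$ at $e_+$, and $0$ elsewhere), that formula reads $K_\lambda(e,f)=\frac{1}{n+\lambda}\,b_e^{T}b_f$. Therefore $M=\frac{1}{n+\lambda}B^{T}B$, where $B$ is the $n\times(|t|-1)$ oriented incidence matrix of $t$ (with zero rows at the vertices of $\Kn$ not belonging to $t$), and so $\det M=(n+\lambda)^{-(|t|-1)}\det(B^{T}B)$. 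It remains to show $\det(B^{T}B)=|t|$.

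For this identity I would use one of two standard arguments. By Cauchy--Binet, $\det(B^{T}B)=\sum_{S}(\det B_S)^2$, the sum over size-$(|t|-1)$ subsets $S$ of the rows; only the $|t|$ subsets obtained by deleting one vertex of $t$ contribute (any other meets a zero row), and each such reduced incidence matrix of a tree is unimodular, so the sum equals $|t|$. Alternatively, $BB^{T}=\Delta_t$ is the graph Laplacian of $t$, which has a simple eigenvalue $0$; the matrix $B^{T}B$, being of size $|t|-1$, has precisely the nonzero eigenvalues of $\Delta_t$, whose product equals $|t|$ times the number of spanning trees of $t$ by the matrix--forest theorem (\Cref{thm.mf}), and that number is $1$ since $t$ is itself a tree. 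Either way, $\det M=\dfrac{|t|}{(n+\lambda)^{|t|-1}}$, which is the claim.

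The argument is short, and the only point requiring a moment's thought is recognizing $M$ as a (rescaled) Gram matrix of incidence vectors and then recalling the unimodularity of the reduced incidence matrix of a tree (equivalently, the one-line matrix--tree computation). Both facts are classical, so I do not anticipate any genuine obstacle.
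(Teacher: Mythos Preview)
Your proof is correct and follows essentially the same route as the paper: apply \Cref{prop.ag} with all edges of $t$ present, recognize $M$ as $\frac{1}{n+\lambda}$ times the Gram matrix of oriented incidence vectors, and then evaluate via Cauchy--Binet together with the unimodularity of the reduced incidence matrix of a tree. The only differences are cosmetic (your $B$ carries extra zero rows for the vertices of $\Kn$ outside $t$, whereas the paper's $\mathcal{A}$ is restricted to $V(t)$) and your optional eigenvalue/matrix--tree alternative for $\det(B^{T}B)=|t|$, which the paper does not mention.
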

\rproof
Let $k=|t|$.
Fix an orientation of $t$.
From \cref{prop.ag}, $\proba{t \subset \lsf(\Kn) }=\det M_{\lambda}$, where $M_{\lambda}$ is matrix of size $(k-1)\times (k-1)$ indexed by the edges of $t$, whose entries are given by Proposition~\ref{prop.rk}. One can check directly that 
$$M_{\lambda}=\frac{1}{n+\lambda} \mathcal{A} \mathcal{A}^{T},$$
where $\mathcal{A}$ is the (rectangular) oriented incidence matrix of $t$: it has size $(k-1)\times k$, with rows indexed by the edges of $t$, columns indexed by the vertices of $t$, and whose nonzero entries are $+1$ (resp. $-1$) when the vertex is the tip (resp. origin) of the edge. Then, by the Cauchy-Binet formula,
$$\det M_\lambda = \frac{1}{(n+\lambda)^{k-1}} \sum_{j=1}^{k} \left(\det \mathcal{A}_{j}\right)^2$$
where $\mathcal{A}_{j}$ is obtained from $\mathcal{A}$ by deletion of column $j$. Then one checks directly that $\det \mathcal{A}_j = \pm 1$, for instance by expanding the determinant over permutations and seeing that only one permutation contributes.
\Qed

\section{Identification of the local limit: Proof of \Cref{thm.lllsf}}

The proof comes in two steps. First, we identify the limit law of the shape of $\lsfkn$ at a given distance $h$ from $\origin$. Second, we show that it coincides with the distribution of $\mathcal{T}_{\alpha}$ restricted at distance $h$ from its root. These steps are the content of the following two propositions.

\bigskip
\noindent
For a given rooted unlabeled tree $T$ and an integer $h$, we denote by:
\begin{itemize}
\item $T_{h}$ the set of vertices of $T$ at distance exactly $h$ from the root;
\item $T_{\leq h}$ the tree given by the intersection of $T$ with the closed ball of radius $h$ centered at the root;
\item $T_{<h}$ the tree given by the intersection of $T$ with the closed ball of radius $h-1$ centered at the root;
\item $\text{Aut}(T)$ the set of graph automorphisms of $T$ preserving its root;
\item $\mathrm{d}(u,v)$ the graph distance between vertices $u$ and $v$.
\end{itemize}
\medskip
\begin{prop} Let $h\in \mathbb{N}$. Let $t$ be an unlabeled tree of height $\leq h$. Then
$$
\proba{\emph{\shape}(\lsfkn,\origin)_{\leq h}=t } = \frac{1}{|\emph{\text{Aut}}(t)|} \frac{(n-1)!}{(n-|t|)! }\frac{1}{(n+\lambda)^{|t|}}\left( n |t_{h}|+\lambda |t| \right) \left(1-\frac{|t_{<h}|}{n+\lambda} \right)^{n-|t|-1} \; .
$$
When $\lambda=\lambda_{n}$,
$$
\lim_{n \to \infty}\proba{\emph{\shape}(\lambda_{n}{\rm \bf SF}(\Kn),\origin)_{\leq h}=t } =
\begin{cases}
 \frac{|t_{h}|}{|\emph{\text{Aut}}(t)|}   {\rm exp}\left(-|t_{<h}| \right)  & {\rm if \; } \lambda_{n} = o(n) \\
\frac{\left(|t_{h}|+\alpha |t|\right)}{|\emph{\text{Aut}}(t)|}  \frac{1}{(1+\alpha)^{|t|}} {\rm exp}\left(-\frac{|t_{<h}|}{1+\alpha} \right)  & {\rm if \; } \lambda_{n} \sim \alpha n \\
\delta_{t=\{ \origin \}} & {\rm if \; } \lambda_{n} \gg n \; .
\end{cases} 
$$
\end{prop}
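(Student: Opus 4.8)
The plan is to compute the probability $\proba{\shape(\lsfkn,\origin)_{\leq h}=t}$ by summing the inclusion-type probabilities over all ways of realizing the pattern $t$ inside $\Kn$, and then extract the three asymptotic regimes. First I would set up a labelling: a labeled embedding of $t$ into $\Kn$ that uses $\origin$ as the root amounts to an injection of the $|t|-1$ non-root vertices of $t$ into the $n-1$ remaining vertices of $\Kn$; there are $\frac{(n-1)!}{(n-|t|)!}$ such injections, but each unlabeled shape $t$ is counted $|\text{Aut}(t)|$ times, which produces the prefactor $\frac{1}{|\text{Aut}(t)|}\frac{(n-1)!}{(n-|t|)!}$. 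For a fixed labeled embedding, the event $\{\shape(\lsfkn,\origin)_{\leq h}=t\}$ is the event that all $|t|-1$ edges of the embedded copy are present in $\lsfkn$, that no other edge joins a vertex of $t_{<h}$ (the internal vertices) to anything outside the embedded copy, and that no edge joins two vertices of $t$ that is not an edge of $t$. Crucially, vertices of $t$ at level exactly $h$ are allowed to have further neighbours outside $t$, so they are \emph{not} forbidden; this is what distinguishes $t_h$ from $t_{<h}$.

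Next I would evaluate this probability via \Cref{prop.ag}: it is the determinant of the matrix $M$ with the $K_\lambda$-block on the required edges and the $(\delta-K_\lambda)$-block on the forbidden edges. Using the explicit form of $K_\lambda$ on $\Kn$ from \Cref{prop.rk}, namely $K_\lambda(e,f)=\frac{1}{n+\lambda}(\mathbf 1_{e_-=f_-}+\mathbf 1_{e_+=f_+}-\mathbf 1_{e_-=f_+}-\mathbf 1_{e_+=f_-})$, this determinant should factor. The natural route is to mimic the proof of \Cref{inc.lem}: write $M$ in the form $\frac{1}{n+\lambda}\mathcal A\mathcal A^T$ on the "present edges" part and then apply a Schur-complement / Cauchy--Binet expansion to incorporate the forbidden edges. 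Concretely, the forbidden edges all touch the internal vertex set $t_{<h}$, so after a matrix-determinant-lemma step the dependence on the $n-|t|-1$ external non-neighbour vertices collapses to a single scalar factor $\bigl(1-\frac{|t_{<h}|}{n+\lambda}\bigr)^{n-|t|-1}$ — the probability cost of each such external vertex not attaching to the internal part — while the combinatorial structure of $t$ itself contributes the factor $\frac{1}{(n+\lambda)^{|t|-1}}$ times a tree-dependent integer. Carrying out the Cauchy--Binet bookkeeping (all minors of the incidence-type matrix being $0$ or $\pm1$, exactly as in \Cref{inc.lem}) should yield the bracket $n|t_h|+\lambda|t|$: the $\lambda|t|$ piece is the "all roots" contribution analogous to \Cref{inc.lem}, and the $n|t_h|$ piece accounts for the $|t_h|$ leaves-at-level-$h$ that are permitted to connect onward. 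I expect this determinant factorization to be the main obstacle: one must correctly identify which edges are forbidden (internal-to-external through $t_{<h}$, plus internal chords of $t$), keep track of signs, and verify that all the relevant minors are $\pm1$; it is essentially a careful generalization of the Cauchy--Binet argument of \Cref{inc.lem} with an extra Schur-complement layer for the non-edges.

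Finally, the asymptotics are routine once the exact formula is in hand. Using $\frac{(n-1)!}{(n-|t|)!}=n^{|t|-1}(1+o(1))$ as $n\to\infty$ with $|t|$ fixed, and $(n+\lambda)^{|t|}$ in the denominator, the ratio $\frac{(n-1)!}{(n-|t|)!(n+\lambda)^{|t|}}$ behaves like $\frac{1}{n+\lambda}\cdot\bigl(\frac{n}{n+\lambda}\bigr)^{|t|-1}$. In the regime $\lambda_n=o(n)$ this is $\sim \frac1n$, so multiplying by $n|t_h|+\lambda_n|t|\sim n|t_h|$ gives $|t_h|$, and $\bigl(1-\frac{|t_{<h}|}{n+\lambda_n}\bigr)^{n-|t|-1}\to e^{-|t_{<h}|}$, yielding the first line. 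In the regime $\lambda_n\sim\alpha n$, $\frac{n}{n+\lambda_n}\to\frac1{1+\alpha}$ and $\frac{1}{n+\lambda_n}(n|t_h|+\lambda_n|t|)\to |t_h|\cdot\frac1{1+\alpha}\cdot(1+\alpha)+\cdots$; more carefully $\frac{n|t_h|+\lambda_n|t|}{n+\lambda_n}\to\frac{|t_h|+\alpha|t|}{1+\alpha}$, combined with the $\frac1{(1+\alpha)^{|t|-1}}$ from the remaining factor this gives $\frac{|t_h|+\alpha|t|}{(1+\alpha)^{|t|}}$, and $\bigl(1-\frac{|t_{<h}|}{n+\lambda_n}\bigr)^{n}\to e^{-|t_{<h}|/(1+\alpha)}$, matching the second line. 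In the regime $\lambda_n\gg n$, the factor $\bigl(\frac{n}{n+\lambda_n}\bigr)^{|t|-1}\to 0$ unless $|t|=1$, and when $|t|=1$ (so $t=\{\origin\}$, $t_h=\emptyset$ for $h\geq1$, or $t_h=\{\origin\}$ for $h=0$) the formula degenerates to $1$; one checks the bracket and the remaining factors give exactly $\delta_{t=\{\origin\}}$, completing the proof.
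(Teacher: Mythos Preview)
Your overall architecture---count labelings to get the $\frac{1}{|\text{Aut}(t)|}\frac{(n-1)!}{(n-|t|)!}$ prefactor, compute the probability for one fixed labeled embedding, then take limits---matches the paper. The middle step, however, is handled differently, and your plan for it has a real gap.

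The paper does \emph{not} evaluate the big determinant from Proposition~\ref{prop.ag} directly. Instead it applies inclusion--exclusion over the set of forbidden edges (the $q=|t_{<h}|(n-|t|)$ edges from an internal vertex to an external vertex), reducing everything to pure \emph{inclusion} probabilities, each given by Lemma~\ref{inc.lem}. A subset $J$ of forbidden edges contributes $\frac{|t|+|J|}{(n+\lambda)^{|t|-1+|J|}}$ if $t_{\rm lab}\cup J$ is still a tree and $0$ otherwise; this happens exactly when no two edges of $J$ share an external endpoint, so the number of such $J$ with $|J|=k$ is $\binom{n-|t|}{k}\,|t_{<h}|^k$. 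The resulting sum
\[
\sum_{k=0}^{n-|t|}(-1)^k\binom{n-|t|}{k}\,|t_{<h}|^k\,\frac{|t|+k}{(n+\lambda)^{|t|-1+k}}
\]
is then evaluated in closed form. The bracket $n|t_h|+\lambda|t|$ and the exponent $n-|t|-1$ come out of this summation, not from any block structure of a determinant.

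Your proposed ``Schur complement / Cauchy--Binet'' factorization is not justified as stated. Two concrete problems: (i)~the ``internal chords of $t$'' you list among the forbidden edges are automatically absent once all edges of $t$ are present (any such chord closes a cycle), so they should not enter the computation; (ii)~your heuristic that each external vertex contributes an independent factor $1-\frac{|t_{<h}|}{n+\lambda}$ would give exponent $n-|t|$, not the correct $n-|t|-1$, and indeed the forbidden-edge block of $K_\lambda$ does \emph{not} block-diagonalize over external vertices: two forbidden edges with distinct external endpoints but the same internal endpoint have a nonzero $K_\lambda$-entry. So the determinant does not factor in the way you suggest, and pushing this route through would amount to rediscovering the inclusion--exclusion above.

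Your asymptotic analysis at the end is correct and is what the paper does as well.
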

\rproof

Let $T^{(n)}_{\rm lab}$ be the connected component of $\lsfkn$ containing $\origin$, $T^{(n)}$ its unlabeled version, $t$ as in the statement and $t_{\rm lab}$ a labeled tree whose shape is $t$. Denote the size of $t$ by $|t|(=|t_{\rm lab}|$). We wish to compute the probability of the event $T^{(n)}_{\rm lab, \leq h}=t_{\rm lab}$. This event coincides with the presence of all the edges of $t_{\rm lab}$ in $T^{(n)}_{\rm lab}$ and with the absence of all the edges joining a vertex of $t_{\rm lab,<h}$ to a vertex which does not belong to $t_{\rm lab}$ (of which there are $n-|t|$). Therefore there are $q:=|t_{\rm lab,<h}|\cdot (n-|t|)$ such absent edges, call them $e_{1},\ldots,e_{q}$. By the inclusion-exclusion principle, we can express the probability of this event only in terms of inclusion probabilities and use \Cref{inc.lem}. Namely,
\begin{equation}
\begin{split}
\proba{T^{(n)}_{\rm lab, \leq h}=t_{\rm lab}}&=\proba{t_{\rm lab} \subset T^{(n)}_{\rm lab, \leq h}, e_{1} \notin T^{(n)}_{\rm lab}, \ldots,e_{q} \notin T^{(n)}_{\rm lab}}\\
&=\sum_{k=0}^{q} (-1)^{k}\sum_{\substack{J \subset \{ 1,\ldots,q \}\\ |J|=k}}\proba{t_{\rm lab} \subset T^{(n)}_{\rm lab},\;  \forall j \in J \; e_{j} \in T^{(n)}_{\rm lab} } \; .
\end{split}
\end{equation}
The probability appearing in the second sum on the rhs is equal to $\frac{|t|+k}{(n+\lambda)^{|t|-1+k}}$ if $t_{\rm lab} \cup \{ e_{j} \ST j \in J\}$ is still a tree, and 0 otherwise. A cycle is formed if and only if at least two edges among $\{ e_{j} \ST j \in J\}$ are incident at the same vertex not in $t_{\rm lab}$. Therefore, as soon as $k> n-|t|$, this probability is zero. Otherwise, there are $\binom{n-|t|}{k}$ choices of ``target'' vertices for the tips of $\{ e_{j} \ST j \in J\}$ and the roots of $\{ e_{j} \ST j \in J\}$ can be chosen freely inside $t_{\rm lab,<h}$. We thus get, after straightforward computation,
\begin{equation}\label{eq.lab}
\begin{split}
\proba{T^{(n)}_{\rm lab, \leq h}=t_{\rm lab}} &= \sum_{k=0}^{n-|t|}(-1)^{k}\binom{n-|t|}{k} |t_{<h}|^{k} \frac{|t|+k}{(n+\lambda)^{|t|-1+k}} \\
&=  \frac{1}{(n+\lambda)^{|t|}} \left(n |t_{h}|+\lambda |t|\right) \left(1-\frac{|t_{<h}|}{n+\lambda} \right)^{n-|t|-1}\;.
\end{split}
\end{equation}
Lastly, we want to compute $\proba{{\shape}(\lsfkn,\origin)_{\leq h}=t }$.  For each labeling ${\bf l}$ of $t$, there are exactly $|\text{Aut}(t)|$ labelings ${\bf l'}$ for which the events $\{ T^{(n)}_{{\rm lab}, \leq h}=t_{{\bf l}}\}$ and $\{ T^{(n)}_{{\rm lab}, \leq h}=t_{{\bf l'}}\}$ are the same. On the other hand, there are $\frac{(n-1)!}{(n-|t|)!}$ possible labelings. This concludes the proof for $n$ fixed, and the limits are straightforward.\newline
\Qed
\medskip
\noindent
We turn to the second step, where we study the limiting random tree $\mathcal{T}_{\alpha}$.
We start with a preliminary lemma on standard subcritical $\text{BGWP}(\beta)$.
\begin{lem}\label{lem.gwp}
Let $\beta<1$, let $h\in \mathbb{N}$. Let $t$ be an unlabeled tree of height $\leq h$.

$$
\proba{\emph{\text{BGWP}}(\beta)_{\leq h}=t} = \frac{1}{|\emph{\text{Aut}}(t)|}\beta ^{|t|-1}e^{-\beta |t_{<h}|} \; .
$$ 

\end{lem}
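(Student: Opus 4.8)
The plan is to realize $\text{BGWP}(\beta)$ in the standard way as a random \emph{plane} (ordered) rooted tree, for which the law on finite plane trees $\tau$ factorizes as $\prod_{v\in\tau} e^{-\beta}\beta^{c(v)}/c(v)!$ with $c(v)$ the number of children of $v$; then to transfer this to the unordered (``shape'') setting by a symmetry count. The shape-level probability in the statement is the sum of $\mathbf{P}(\text{GW}=\tau)$ over ordered trees $\tau$ of shape $t$, all of which turn out to have the same probability, so the only combinatorial input is the number of ordered representatives of $t$.

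\textbf{The ordered computation.} Fix an ordered tree $\tau$ of height $\le h$. The cut tree $(\text{BGWP}(\beta))_{\le h}$ equals $\tau$ precisely when every vertex at distance $\le h-1$ from the root has exactly the children prescribed by $\tau$ (vertices at level $h$ impose no constraint, since their descendants are discarded). Hence
\[
\mathbf{P}\bigl((\text{BGWP}(\beta))_{\le h}=\tau\bigr)=\prod_{v\in \tau_{<h}} e^{-\beta}\,\frac{\beta^{c(v)}}{c(v)!}.
\]
Since $\tau$ has height $\le h$, every non-root vertex has a parent lying in $\tau_{<h}$, so $\sum_{v\in\tau_{<h}}c(v)=|\tau|-1$, and the product above equals $e^{-\beta|\tau_{<h}|}\,\beta^{|\tau|-1}\prod_{v\in\tau_{<h}}\tfrac{1}{c(v)!}$.

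\textbf{Passage to shapes.} For an unordered rooted tree $t$, the group $\text{Aut}(t)$ acts on the set of plane structures of $t$ (a linear order on the children of each vertex; there are $\prod_{v}c_t(v)!$ of them), and this action is \emph{free}: an automorphism preserving a plane order fixes the root, hence the first child, the second child, and so on, hence is the identity by induction on the height. Therefore $t$ has exactly $\prod_{v}c_t(v)!/|\text{Aut}(t)|$ ordered representatives, each with the same size, the same $\tau_{<h}$ (so $|\tau_{<h}|=|t_{<h}|$) and the same multiset of child-numbers, hence the same probability as above; note also $c_t(v)=0$ for $v\in t_h$, so $\prod_{v\in t}c_t(v)!=\prod_{v\in t_{<h}}c_t(v)!$. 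Summing over these representatives,
\[
\mathbf{P}\bigl((\text{BGWP}(\beta))_{\le h}=t\bigr)=\frac{\prod_{v\in t}c_t(v)!}{|\text{Aut}(t)|}\,\cdot\,e^{-\beta|t_{<h}|}\beta^{|t|-1}\prod_{v\in t_{<h}}\frac{1}{c_t(v)!}=\frac{1}{|\text{Aut}(t)|}\,\beta^{|t|-1}e^{-\beta|t_{<h}|},
\]
the claimed identity.

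\textbf{Main obstacle.} The only non-bookkeeping point, which I would state carefully, is the freeness of the $\text{Aut}(t)$-action on plane structures (equivalently, the decomposition $|\text{Aut}(t)|=\prod_i a_i!\,\prod_i|\text{Aut}(s_i)|^{a_i}$ when the root of $t$ carries child-subtrees of isomorphism types $s_i$ with multiplicities $a_i$); everything else is an exact cancellation of factorials and exponentials. An alternative proof by induction on $h$, conditioning on the root degree $d$ and distributing the $d$ i.i.d.\ subtrees among the types present in $t$ via the multinomial coefficient $d!/\prod_i a_i!$, works equally well but needs exactly the same automorphism-group decomposition, so the ordered-tree argument above is the cleaner packaging.
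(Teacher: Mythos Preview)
Your argument is correct and takes a genuinely different route from the paper's. The paper proceeds by induction on $h$: it conditions on the number of children of the root, groups the root's subtrees into isomorphism classes of sizes $n_1,\dots,n_\ell$, writes the recursion
\[
\mathbf{P}\bigl(\text{BGWP}(\beta)_{\le h}=t\bigr)=\frac{\beta^{n_1+\cdots+n_\ell}e^{-\beta}}{n_1!\cdots n_\ell!}\prod_i \mathbf{P}\bigl(\text{BGWP}(\beta)_{\le h-1}=t_i\bigr)^{n_i},
\]
and then checks that the right-hand side of the lemma satisfies the same recursion, using $|\text{Aut}(t)|=\prod_i n_i!\,|\text{Aut}(t_i)|^{n_i}$. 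Your approach bypasses the induction entirely: the product law on plane trees gives a closed form at once, and the passage to shapes is a single orbit count $\prod_v c_t(v)!/|\text{Aut}(t)|$, justified by freeness of the $\text{Aut}(t)$-action on planar orderings. As you observe, both proofs ultimately rely on the same structural decomposition of $\text{Aut}(t)$; your packaging trades the inductive bookkeeping for the (standard) plane-tree formalism and the exact factorial cancellation, while the paper's version never leaves the unordered setting.
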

\rproof
We compare the recursion relations satisfied by both quantities. We group the children of the root in equivalence classes: two children are in the same class if and only if they are the root of isomorphic subtrees. We denote now by $\ell$ the number of equivalence classes, and by $n_{1},\ldots, n_{l}$ their cardinality. Since the number of children of the root is $\text{Poisson}(\beta)$ distributed, we can write
\begin{equation}\label{eq.1st}
\begin{split}
\proba{{\text{BGWP}}(\beta)_{\leq h}=t} &= \frac{\beta^{n_{1}+\cdots+n_{l}}e^{-\beta}}{(n_{1}+\cdots+n_{l})!} \binom{n_{1}+\cdots+n_{l}}{n_{1},\ldots,n_{l}} \prod_{i=1}^{l} \proba{{\text{BGWP}}(\beta)_{\leq h-1}=t_{i}}^{n_{i}} \\
& = \frac{\beta^{n_{1}+\cdots+n_{l}}e^{-\beta}}{n_{1}! \cdots n_{l}!} \prod_{i=1}^{l} \proba{{\text{BGWP}}(\beta)_{\leq h-1}=t_{i}}^{n_{i}} 
\end{split}
\end{equation}
\noindent
where $t_{i}$ is the subtree of $t$ emanating from a child of the root belonging to the $i$-th equivalence class. On the other hand, $|{\text{Aut}}(t)|$ satisfies the recursion
$$
|{\text{Aut}}(t)|= \prod_{i=1}^{l} n_{i}! |{\text{Aut}}(t_{i})|^{n_{i}}
$$
hence if we denote by $f$ the function on finite trees which maps $t$ onto $f(t)= \frac{1}{|{\text{Aut}}(t)|}\beta ^{|t|-1}e^{-\beta |t_{<H(t)}|}$, where $H(t)$ is the height of $t$,
\begin{equation}\label{eq.2nd}
\frac{\beta^{n_{1}+\cdots+n_{l}}e^{-\beta}}{n_{1}! \cdots n_{l}!} \prod_{i=1}^{l} f(t_{i})^{n_{i}}=\frac{\beta^{n_{1}+\cdots+n_{l}}e^{-\beta}}{n_{1}! \cdots n_{l}!} \prod_{i=1}^{l} \left( \frac{1}{|{\text{Aut}}(t_{i})|}\beta ^{|t_{i}|-1}e^{-\beta |t_{i,<H(t_{i})}|}\right)^{n_{i}} = f(t) \; .
\end{equation}
\Cref{eq.1st,eq.2nd} show that both quantities of the statement satisfy the same recursion.
\Qed

\medskip
\noindent
We can now identify the distribution of ${\shape}{(\mathcal{T}_{\alpha},\origin)}$:

\begin{prop} Let $h\in \mathbb{N}$. Let $t$ be an unlabeled tree of height $\leq h$. Then
$$
\proba{\emph{\shape}{(\mathcal{T}_{\alpha},\origin)}_{\leq h}=t}= \frac{|t_{h}|+\alpha |t|}{|\emph{\text{Aut}}(t)|}  \frac{1}{(1+\alpha)^{|t|}} {\rm exp}\left(-\frac{|t_{<h}|}{1+\alpha} \right)  \; .
$$
\end{prop}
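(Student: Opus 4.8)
The plan is to realise $\mathcal{T}_{\alpha}$ through a self-similar recursion and then check that the announced formula solves it. Write $\beta=\tfrac1{1+\alpha}$, so that ${\rm BGWP}(\beta)$ has ${\rm Poisson}(\beta)$ offspring distribution, and recall that the spine of $\mathcal{T}_{\alpha}$ has a ${\rm Geom}(\tfrac{\alpha}{1+\alpha})$-distributed size: in particular it is reduced to $\{\origin\}$ with probability $\tfrac{\alpha}{1+\alpha}$, and has at least two vertices with probability $\tfrac1{1+\alpha}$. By the lack-of-memory property of the geometric law this gives the following description of $\mathcal{T}_{\alpha}$: with probability $\tfrac{\alpha}{1+\alpha}$ it is just a ${\rm BGWP}(\beta)$ tree rooted at $\origin$; and with probability $\tfrac1{1+\alpha}$ the root $\origin$ has one distinguished child — the second spine vertex, whose subtree is an independent copy of $\mathcal{T}_{\alpha}$ — together with a further, independent, ${\rm Poisson}(\beta)$-distributed number of children, each the root of an independent ${\rm BGWP}(\beta)$ subtree, these two families being independent of each other.

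Fix $h\geq1$ and an unlabeled tree $t$ of height $\leq h$. Let $s_{1},\dots,s_{l}$ be the pairwise non-isomorphic subtrees hanging from the children of the root of $t$, with respective multiplicities $n_{1},\dots,n_{l}$, and let $t^{(j)}$ be the tree obtained from $t$ by deleting one subtree isomorphic to $s_{j}$ at the root. Set $G_{h}(t):=\proba{\shape(\mathcal{T}_{\alpha},\origin)_{\leq h}=t}$ and $F_{h}(t):=\proba{{\rm BGWP}(\beta)_{\leq h}=t}=\tfrac1{|\text{Aut}(t)|}\beta^{|t|-1}e^{-\beta|t_{<h}|}$, the last equality being \Cref{lem.gwp}. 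Conditioning on whether the root of $\mathcal{T}_{\alpha}$ carries a spine child, the description above gives
$$
G_{h}(t)=\tfrac{\alpha}{1+\alpha}\,F_{h}(t)+\tfrac1{1+\alpha}\sum_{j=1}^{l}G_{h-1}(s_{j})\,F_{h}\bigl(t^{(j)}\bigr).
$$
Indeed, the first term is the contribution of the event that there is no spine child, where $\mathcal{T}_{\alpha}$ is literally a ${\rm BGWP}(\beta)$; and in the second term the spine child accounts for exactly one of the subtrees $s_{j}$ — with probability $G_{h-1}(s_{j})$, the contributions being disjoint over $j$ since the subtree of a fixed vertex has a well-defined isomorphism type — while $\origin$ together with its remaining children (which, with their subtrees, again form an independent ${\rm BGWP}(\beta)$) must restrict at height $h$ to $t^{(j)}$, an event of probability $F_{h}(t^{(j)})$ by \Cref{lem.gwp}. (For $t=\{\origin\}$ the sum is empty, consistently with $G_{h}(\{\origin\})=\tfrac{\alpha}{1+\alpha}e^{-\beta}$.)

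It then suffices to check that $\Phi_{h}(t):=\tfrac{|t_{h}|+\alpha|t|}{|\text{Aut}(t)|}\tfrac1{(1+\alpha)^{|t|}}e^{-|t_{<h}|/(1+\alpha)}$ satisfies the same recursion and the same initial condition, which I do by induction on $h$. For $h=0$ the only tree is $t=\{\origin\}$, and $G_{0}(t)=\Phi_{0}(t)=1$. For the inductive step ($h\geq1$), substitute $\Phi_{h-1}(s_{j})$ and the displayed expression for $F_{h}$ into the right-hand side of the recursion and simplify, using the elementary identities $|t^{(j)}|=|t|-|s_{j}|$, $|(t^{(j)})_{<h}|=|t_{<h}|-|(s_{j})_{<h-1}|$ and $|\text{Aut}(t)|=n_{j}\,|\text{Aut}(s_{j})|\,|\text{Aut}(t^{(j)})|$; all powers of $(1+\alpha)$ and all exponentials collapse to the common factor $\tfrac1{(1+\alpha)^{|t|}}e^{-|t_{<h}|/(1+\alpha)}$, while the numerator becomes $\alpha+\sum_{j}n_{j}\bigl(|(s_{j})_{h-1}|+\alpha|s_{j}|\bigr)=\alpha+|t_{h}|+\alpha(|t|-1)=|t_{h}|+\alpha|t|$, using $\sum_{j}n_{j}|s_{j}|=|t|-1$ and $\sum_{j}n_{j}|(s_{j})_{h-1}|=|t_{h}|$. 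Hence $G_{h}(t)=\Phi_{h}(t)$, which closes the induction.

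The one genuinely delicate point is to set up the recursion correctly: one must verify that conditioning on which child of $\origin$ is the spine vertex produces precisely the sum $\sum_{j}G_{h-1}(s_{j})F_{h}(t^{(j)})$ — in particular that the ``bush'' part is exactly a ${\rm BGWP}(\beta)$ inclusion probability, so that \Cref{lem.gwp} applies verbatim — and, on the combinatorial side, that $|(t^{(j)})_{<h}|$ equals $|t_{<h}|-|(s_{j})_{<h-1}|$ and is not off by one (the deleted child of $\origin$ is the same vertex as the root of the deleted copy of $s_{j}$). The ensuing algebra is routine.
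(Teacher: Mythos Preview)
Your proof is correct and takes a genuinely different route from the paper's. The paper conditions on the position $v$ of the \emph{last} spine vertex inside $t$: it sums over the $\mathrm{Aut}(t)$-orbits of $v$ in $t_h$ (spine reaches height $h$) and in $t_{<h}$ (spine stops before), applies \Cref{lem.gwp} to each of the bushes along the spine, and then invokes the orbit-counting (Burnside) lemma to turn the two sums over orbits into the counts $|t_h|$ and $|t_{<h}|$ (yielding $|t_h|+\alpha|t|$ after using $|t_{<h}|+|t_h|=|t|$). Your argument instead exploits the memorylessness of the geometric spine to get a one-step self-similarity for $\mathcal{T}_\alpha$, producing the recursion $G_h(t)=\tfrac{\alpha}{1+\alpha}F_h(t)+\tfrac{1}{1+\alpha}\sum_j G_{h-1}(s_j)F_h(t^{(j)})$ and then checking that $\Phi_h$ solves it. Your verification is clean: the key identities $|\mathrm{Aut}(t)|=n_j\,|\mathrm{Aut}(s_j)|\,|\mathrm{Aut}(t^{(j)})|$, $|t^{(j)}|=|t|-|s_j|$ and $|(t^{(j)})_{<h}|=|t_{<h}|-|(s_j)_{<h-1}|$ make all exponentials and powers of $1+\alpha$ collapse, and the numerators add up as you say. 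The upshot is that your approach avoids Burnside entirely and is arguably more economical; the paper's decomposition, on the other hand, does not rely on the spine law being memoryless and would adapt more directly to other spine length distributions.
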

\rproof

We decompose the probability according to the position of the vertex $v$ of the spine which is the farthest from $\origin$. Let us denote by $[v]$ its equivalence class in the quotient of $t$ by the action of $\text{Aut}(t)$. The vertex $v$ might be at distance $h$ or not; in any case, the probability given that $v$ satisfies our requirement depends only on $[v]$. Calling $L$ the cardinality of the spine, we will use the relation $\proba{L\geq h}=\left(\frac{1}{1+\alpha}\right)^{h}$ when $v \in t_{h}$ and the relation $\proba{L=k+1}=\frac{\alpha}{(1+\alpha)^{k+1}}$ when $\mathrm{d}(v,\origin)=k$. Calling $t_{i,v}$ the tree growing from the $i$-th vertex of the spine, we have
\begin{equation}
\begin{split}
\proba{{\shape}{(\mathcal{T}_{\alpha},\origin)}_{\leq h}=t} &= \sum_{[v]\in t_{h}/\text{Aut}(t)} \left[\prod_{i=0}^{h} \proba{{\text{BGWP}}\left(\frac{1}{1+\alpha}\right)_{h-i+1}=t_{i,v}}\right]\left(\frac{1}{1+\alpha}\right)^{h}+ \\
&+ \sum_{[v] \in t_{<h}/\text{Aut}(t)} \left[\prod_{i=0}^{\mathrm{d}({v},\origin)}\proba{{\text{BGWP}}\left(\frac{1}{1+\alpha}\right)_{h-i+1}=t_{i,v}} \right] \frac{\alpha}{(1+\alpha)^{\mathrm{d}(v,\origin)+1}} \; .
\end{split}
\end{equation}
We use~\Cref{lem.gwp} with $\beta=\frac{1}{1+\alpha}$. Note that 
$$(1+\alpha)^{|t_{1,v}|-1}\cdots (1+\alpha)^{|t_{h,v}|-1} (1+\alpha)^{\mathrm{d}(v,\origin)}  = (1+\alpha)^{|t|},$$
which gives
\begin{equation}
\begin{split}
\proba{{\shape}{(\mathcal{T}_{\alpha},\origin)}_{\leq h}=t} &= \sum_{[v] \in t_{h}/\text{Aut}(t)} \frac{{\rm exp}\left(-\frac{|t_{<h}|}{1+\alpha} \right)}{(1+\alpha)^{|t|}} \prod_{i=0}^{h}\frac{1}{|\text{Aut}(t_{i,v})|}+\\
&+\frac{\alpha}{1+\alpha}\sum_{[v] \in t_{<h}/\text{Aut}(t)}\frac{{\rm exp}\left(-\frac{|t_{<h}|}{1+\alpha} \right)}{(1+\alpha)^{|t|}} \prod_{i=0}^{\mathrm{d}(v,\origin)} \frac{1}{|\text{Aut}(t_{i,v})|} \; .
\end{split}
\end{equation}

\medskip
\noindent
To conclude, we use the Burnside lemma for the action of $\text{Aut}$ on $t_h$ and on $t_{<h}$, giving 
$$|t_{h}| = \displaystyle\sum_{[v] \in t_{h}/\text{Aut}(t)} |\text{Aut}(t)| \prod_{i=0}^{h}\frac{1}{|\text{Aut}(t_{i,v})|} \text{ and } |t_{<h}|=\displaystyle \sum_{[v] \in t_{<h}/\text{Aut}(t)} |\text{Aut}(t)| \prod_{i=0}^{\mathrm{d}({v},\origin)} \frac{1}{|\text{Aut}(t_{i,{v}})|},$$
and the statement follows.

\Qed

\bibliographystyle{alpha}

\bibliography{biblio}

\end{document}